\newtheorem{theorem}[equation]{Theorem}
\newtheorem{proposition}[equation]{Proposition}
\theoremstyle{definition}
\numberwithin{equation}{section}
\begin{document}
\setlength{\jot}{0pt} 
\title{Bounds on the number of conjugacy classes of the symmetric and alternating groups}

\today
\author{Bret Benesh}
\address{
Department of Mathematics,
College of Saint Benedict and Saint John's University,
37 College Avenue South,
Saint Joseph, MN 56374-5011, USA,
bbenesh@csbsju.edu
}
\author{Cong Tuan Son Van}
\address{
Department of Mathematics,
Kansas State University,
138 Cardwell Hall,
1228 N. 17th Street,
Manhattan, KS 66506-2602, USA,
congvan@math.ksu.edu
}

\begin{abstract}
Let $G$ be a finite group with Sylow subgroups $P_1,\ldots,P_n$, and let $k(G)$ denote the number of conjugacy classes of $G$.  Pyber asked if $k(G) \leq \prod_{i=1}^n k(P_i)$ for all finite groups $G$.  With the help of GAP, we prove that Pyber's inequality holds for all symmetric and alternating groups.
\end{abstract}

\maketitle

\begin{section}{Introduction}
L. Pyber submitted Problem 14.71 to the Kourovka Notebook~\cite{Kourovka}, which reads:  ``Let $k(H)$ denote the number of conjugacy classes of a group $H$, and $G$ be a group with Sylow $p$-groups $P_1,\ldots,P_n$.  Prove or disprove:  $k(G) \leq k(P_1)\cdot\ldots\cdot k(P_n)$."    We will let $k_p(G)$ denote $k(P)$ for a Sylow $p$-subgroup $P$ of $G$, which means that Pyber's inequality can be restated as $k(G) \leq \prod_{p\bigm||G|}k_p(G)$.  We will verify that this inequality holds if $G$ is a symmetric group $S_n$ or alternating group $A_n$.  Our strategy is to use estimates from analytic number theory to show that that Pyber's inequality holds for $n \geq 60,000$ and  a GAP~\cite{GAP} calculation to show the result holds for $n < 60,000$.

\end{section}

\begin{section}{Main Results}
Let $p(n)$ denote the number of partitions of an integer $n$. It is well-known that $k(S_n)=p(n)$ and $k(A_n) \leq 2k(S_n) = 2p(n)$ (see~\cite[Section~11.1]{Scott}, for instance). Additionally, every Sylow $p$-subgroup $P$ of $S_n$ is a Sylow $p$-subgroup of $A_n$ if $p$ is odd, and every Sylow $2$-subgroup of $S_n$ has order twice that of every Sylow $2$-subgroup of $A_n$.   

Now let $[n]_p$ denote the largest power of $p$ that divides $n$ for a positive integer $n$ and prime $p$.   The first proposition is a bound due to Hall, and the second proposition is the result of a simple GAP script.

\begin{proposition}\cite[Chapter~V.15.2]{Huppert}\label{prop:pbound}
Let $P$ be a finite group of order $p^{2m+e}$ for some prime $p$, nonnegative integer $m$, and $e \in \{0,1\}$.  Then $k(P) \geq p^e+(p^2-1)m$ 
\end{proposition}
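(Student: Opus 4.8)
The plan is to prove the bound by strong induction on $|P| = p^{n}$ with $n = 2m+e$, reducing $P$ through its centre two steps at a time. The base cases $n \le 1$ are immediate: the trivial group and the cyclic group of order $p$ have $p^{0}=1$ and $p^{1}=p$ classes respectively, meeting the bound with equality. For the inductive step I would pass from $P$ to $\bar P := P/N$ for a central subgroup $N=\langle z\rangle$ of order $p$ (such an $N$ exists because $Z(P)\neq 1$), controlled by the following counting lemma.

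\emph{Quotient lemma.} With $\pi\colon P \to \bar P$ the projection, $k(P)=k(\bar P)+(p-1)t$ for some integer $t\ge 0$. To see this, fix a class $\bar C$ of $\bar P$; its preimage $\pi^{-1}(\bar C)$ is a union of $P$-classes, and for each such class $C=K(g)$ the surjection $C\to\bar C$ has all fibres of one size $s_{C}=|C\cap gN|$, so $|C| = |\bar C|\cdot s_{C}$. Because $z$ is central, $\{\,z^{j}: gz^{j}\sim_P g\,\}$ is a subgroup of $N$ (if $xgx^{-1}=gz^{a}$ and $ygy^{-1}=gz^{b}$ then $xy\,g\,(xy)^{-1}=gz^{a+b}$), so $s_{C}\in\{1,p\}$. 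Since $|\pi^{-1}(\bar C)| = p\,|\bar C| = \sum_{C}|\bar C|\,s_{C}$, we get $\sum_{C} s_{C}=p$; hence either one class lies above $\bar C$ (with $s_{C}=p$) or exactly $p$ do (each with $s_{C}=1$). Writing $t$ for the number of classes $\bar C$ of the second, ``split,'' type gives the stated identity.

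Applying the lemma twice — to $P$ with a central $N_{1}$ of order $p$, and then to $\bar P$ with a central $N_{2}$ of order $p$, whose preimage $M \trianglelefteq P$ has order $p^{2}$ and lies in $Z_{2}(P)$ — yields $k(P) = k(P/M) + (p-1)(t_{1}+t_{2})$, where $P/M$ has order $p^{2(m-1)+e}$. By induction $k(P/M) \ge p^{e}+(p^{2}-1)(m-1)$, so the proposition reduces to the single inequality $t_{1}+t_{2}\ge p+1$. A first estimate is free: each element of $Z(P)$ maps to a central singleton of $\bar P$ whose full preimage consists of $p$ singleton $P$-classes, so every such class is split and $t \ge |Z(P)|/p$. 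When $|Z(P)|\ge p^{2}$ this already gives $t_{1}\ge p$, and since $t_{2}\ge 1$ always, the reduction succeeds.

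The hard part is the residual case $|Z(P)| = p$ with $Z_{2}(P)$ small — the maximal-class $p$-groups — where the central-singleton count yields only $t_{1}+t_{2}\ge 2$. Here one must actually count non-central split classes: $\bar C$ is split precisely when $z\notin \{\,g^{-1}xgx^{-1}: x\in P\,\}$ for $g$ above it, so the task becomes showing that the central generator $z$ is missed by enough of these commutator sets, or equivalently analysing the permutation $C\mapsto zC$ of the $P$-classes and bounding its number of length-$p$ orbits from below. I expect this estimate — establishing $t_{1}+t_{2}\ge p+1$ uniformly, with the structure of $Z_{2}(P)/Z(P)$ as the main lever — to be the crux of the argument; it is sharp, since for the extraspecial group of order $p^{3}$ one computes $t_{1}=1$ and $t_{2}=p$, meeting the bound $k(P)=p^{2}+p-1$ with equality.
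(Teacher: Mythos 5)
The paper offers no proof of this proposition to compare against: it is imported verbatim from Huppert \cite[Chapter~V.15.2]{Huppert}, where it appears as a theorem of P.~Hall. Judged on its own, the scaffolding of your argument is correct: the quotient lemma is sound ($\{z^{j}: gz^{j}\sim_{P} g\}$ really is a subgroup of $N$, so $s_{C}\in\{1,p\}$ and $k(P)=k(\bar P)+(p-1)t$ with $t\ge |Z(P)|/p$), the two-step reduction correctly converts the inductive step into the single inequality $t_{1}+t_{2}\ge p+1$, and the case $|Z(P)|\ge p^{2}$ is disposed of properly.

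However, there is a genuine gap, and you have flagged it yourself: in the residual case $|Z(P)|=p$ with $|Z_{2}(P)|=p^{2}$, your central-singleton count gives only $t_{1}+t_{2}\ge 2$, and you never establish the required $t_{1}+t_{2}\ge p+1$ --- you only write that you ``expect'' the necessary estimate to hold. That case is not a loose end to be tidied; it is where the entire content of Hall's theorem sits, since for groups with large centre the bound is easy. It is also genuinely tight there: for $P=D_{16}$ (dihedral of order $16$, so $p=2$, $m=2$, $e=0$) one has $|Z(P)|=2$, $k(P)=7=2^{0}+3\cdot 2$, and one computes $t_{1}=2$, $t_{2}=1$, so $t_{1}+t_{2}=3=p+1$ exactly, while your easy estimate yields only $2$; the missing unit comes from a noncentral split class, i.e.\ a class with $z\notin\{[g,x]:x\in P\}$, which your argument never exhibits. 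Until you prove, for small-centre $p$-groups, that enough classes fail to absorb the central generator, the induction does not close and the proposition is not proved. If you want to finish along these lines, a standard lever (essentially Hall's) is to work with a normal subgroup $M\le Z_{2}(P)$ of order $p^{2}$ and exploit that $g\mapsto [x,g]$ is a homomorphism of $P$ into $Z(P)\cap M$ for each $x\in M$; this gives concrete control on class sizes near the centre, rather than an unquantified appeal to the orbit structure of $C\mapsto zC$.
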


\begin{proposition}\label{prop:SmallnMain}
If $n < 60,000$, then $2p(n) \leq \prod_{p \bigm| \frac{n!}{2}} \left(p^{e_p}+(p^2-1)m_p\right)$,  where $[\frac{n!}{2}]_p=p^{2m_p+e_p}$  for some $m_p$ and $e_p$ with $e_p \in \{0,1\}$ for all primes $p$ dividing $\frac{n!}{2}$. 
\end{proposition}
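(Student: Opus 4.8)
The plan is to treat this claim for what it is---a single finite verification---and then to describe the computation carefully enough that its correctness is transparent. There are only finitely many integers $n < 60{,}000$, and for each one both sides of the asserted inequality are explicit integers, so it suffices to compute each side exactly and compare. I would therefore write one routine that, given $n$, returns the left-hand side $2p(n)$, a second routine that returns the right-hand side $\prod_{p \mid n!/2}\left(p^{e_p}+(p^2-1)m_p\right)$, and then loop over every $n$ in the range, recording that the inequality holds in each instance.

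For the right-hand side, I would first observe that the primes dividing $|A_n| = n!/2$ are exactly the primes $p \le n$ (once $n$ is large enough that $n!/2$ is still divisible by each of them, the only delicate case being $p=2$ at the very smallest $n$). For each such $p$ I would compute the exponent $2m_p + e_p = [n!/2]_p$ via Legendre's formula $\sum_{i \ge 1} \lfloor n/p^i \rfloor$ for the $p$-adic valuation of $n!$, subtracting $1$ in the single case $p=2$ to account for the division by $2$. From this exponent I extract $e_p \in \{0,1\}$ by its parity and $m_p = \lfloor (2m_p+e_p)/2 \rfloor$, form the integer $p^{e_p}+(p^2-1)m_p$, and multiply these factors together over all $p \le n$. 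This is precisely the quantity of interest, since by Proposition~\ref{prop:pbound} each factor is a lower bound for $k_p(A_n)$, so the product underestimates $\prod_p k_p(A_n)$.

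For the left-hand side I would generate the partition numbers $p(n)$ across the whole range, most efficiently through Euler's pentagonal-number recurrence (or simply GAP's built-in partition count), and double each value. The one genuine point of care---rather than any conceptual difficulty---is that every arithmetic operation must be performed in exact integer arithmetic: $p(n)$ for the largest $n$ in the range already has several hundred decimal digits and the product on the right is larger still, so floating point is useless and one must rely on GAP's arbitrary-precision integers. I expect the main obstacle to be purely one of scale and bookkeeping: iterating over the roughly $\pi(n)\approx n/\ln n$ relevant primes for each of the tens of thousands of values of $n$ while keeping all intermediate products exact. The critical instances to confirm are the smallest values of $n$, where the margin between the two sides is slimmest and the crude estimate $2p(n)$ for $k(A_n)$ is loosest; once these are checked directly, the inequality is easily satisfied for all larger $n$, since the right-hand side grows far faster than the subexponential left-hand side.
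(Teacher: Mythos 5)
Your proposal matches the paper's approach exactly: the paper offers no written argument for this proposition, describing it only as ``the result of a simple GAP script,'' which is precisely the exhaustive exact-integer verification you outline (Legendre's formula for the exponents, the pentagonal recurrence or built-in partition counts for $p(n)$, arbitrary-precision arithmetic throughout). The one caveat worth flagging is that the inequality as literally stated fails for $n\le 3$ (e.g., $2p(3)=6$ exceeds the single factor $3$), a boundary issue the paper silently absorbs by treating $n\in\{1,2,3\}$ separately in the proof of Proposition~\ref{prop:SmallnResult}; your check of the smallest cases would surface this, so the verification should be understood as running over $4\le n<60{,}000$.
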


\begin{proposition}\label{prop:SmallnResult}
If $n < 60,000$, then $k(S_n) \leq \prod_{p \bigm| n!}k_p(S_n)$ and $k(A_n) \leq \prod_{p \bigm| \frac{n!}{2}}k_p(A_n)$.
\end{proposition}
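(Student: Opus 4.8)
The plan is to obtain Proposition~\ref{prop:SmallnResult} by chaining the elementary character counts recorded at the start of this section with Propositions~\ref{prop:pbound} and~\ref{prop:SmallnMain}, so that essentially no new estimate is needed. I would dispatch the alternating case first, since it is the more transparent of the two. Because $|A_n| = \frac{n!}{2}$, each Sylow $p$-subgroup of $A_n$ has order $[\frac{n!}{2}]_p = p^{2m_p+e_p}$, so Hall's bound (Proposition~\ref{prop:pbound}) gives $k_p(A_n) \geq p^{e_p}+(p^2-1)m_p$ for every prime $p \bigm| \frac{n!}{2}$. Multiplying these over all such $p$ and invoking Proposition~\ref{prop:SmallnMain} together with $k(A_n) \leq 2p(n)$ yields
$$ k(A_n) \;\leq\; 2p(n) \;\leq\; \prod_{p \bigm| \frac{n!}{2}} \bigl(p^{e_p}+(p^2-1)m_p\bigr) \;\leq\; \prod_{p \bigm| \frac{n!}{2}} k_p(A_n), $$
which is exactly the alternating half.

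For the symmetric group I would start from $k(S_n) = p(n) \leq 2p(n)$ and apply Proposition~\ref{prop:SmallnMain} again, and then compare the resulting product, which is assembled from the Sylow data of $A_n$, with $\prod_{p \bigm| n!} k_p(S_n)$. For every odd prime $p$ the two groups share a common Sylow $p$-subgroup, so $[n!]_p = [\frac{n!}{2}]_p$ and Hall's bound gives $k_p(S_n) \geq p^{e_p}+(p^2-1)m_p$ with no change at all. Since the primes dividing $n!$ and $\frac{n!}{2}$ coincide for $n \geq 4$, the only term that can behave differently is the one at $p=2$, where $[n!]_2 = 2\cdot[\frac{n!}{2}]_2$.

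The main (and essentially only) obstacle is therefore this $p=2$ discrepancy: I must verify that doubling the order of the Sylow $2$-subgroup cannot decrease the Hall lower bound. Writing $[\frac{n!}{2}]_2 = 2^{2m_2+e_2}$, a short case split on $e_2 \in \{0,1\}$ shows that $[n!]_2 = 2^{2m'_2+e'_2}$ with $(m'_2,e'_2)=(m_2,1)$ when $e_2=0$ and $(m'_2,e'_2)=(m_2+1,0)$ when $e_2=1$; in both cases $2^{e'_2}+3m'_2 \geq 2^{e_2}+3m_2$, so Hall's bound yields $k_2(S_n) \geq 2^{e_2}+3m_2$. Combining the odd-prime estimates with this one gives $\prod_{p \bigm| \frac{n!}{2}} \bigl(p^{e_p}+(p^2-1)m_p\bigr) \leq \prod_{p \bigm| n!} k_p(S_n)$, and hence $k(S_n) \leq \prod_{p \bigm| n!} k_p(S_n)$, completing the symmetric half. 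The finitely many genuinely small degrees $n \in \{2,3\}$, where $\frac{n!}{2}$ fails to be even and the two prime sets differ, are checked directly.
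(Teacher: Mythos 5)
Your proposal is correct and follows essentially the same route as the paper: handle small $n$ directly, bound $k(A_n)$ and $k(S_n)$ by $2p(n)$, invoke Propositions~\ref{prop:SmallnMain} and~\ref{prop:pbound}, and pass from the Sylow data of $A_n$ to that of $S_n$ using the coincidence of odd Sylow subgroups together with the behavior of Hall's bound at $p=2$. Your explicit case split on $e_2\in\{0,1\}$ verifying that doubling the order of the Sylow $2$-subgroup does not decrease the bound $p^{e}+(p^2-1)m$ is a welcome elaboration of a step the paper states only tersely.
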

\begin{proof}
The result is easy to check if $n\in\{1,2,3\}$, so assume that $4 \leq n < 60,000$.  The right-side of the inequality from Proposition~\ref{prop:SmallnMain} exactly bounds $\prod_{p \bigm| |A_n|}k_p(A_n)$ by Proposition~\ref{prop:pbound}, and so we have $k(A_n) \leq 2p(n) \leq \prod_{p \bigm| |A_n|}k_p(A_n)$ for all $n < 60,000$.

Because the bounds in Proposition~\ref{prop:pbound} are only a function of the order of the Sylow subgroup, the lower bound from Proposition~\ref{prop:pbound} for $k_2(A_n)$ is also a lower bound for $k_2(S_n)$.  Then we use the notation and result from Proposition~\ref{prop:SmallnMain} and the fact that Sylow $p$-subgroups of $A_n$ have the same order as Sylow $p$-subgroups of $S_n$ for odd $p$ to get 

\begin{align*}
k(S_n)  &< 2p(n) \\
&\leq \prod_{p \bigm| (n!/2)}(p^{e_p}+(p^2-1)m_p)\\
&\leq (2^{e_2}+(2^2-1)m_2)\prod_{\substack{p \bigm| (n!/2) \\  p\text{ odd prime}}}(p^{e_p}+(p^2-1)m_p)\\
&\leq k_2(S_n)\prod_{\substack{p \bigm| n! \\ p\text{ odd prime}}}(p^{e_p}+(p^2-1)m_p)\\
&\leq k_2(S_n)\prod_{\substack{p \bigm| n! \\ p\text{ odd prime}}}k_p(S_n)\\
&\leq  \prod_{p \bigm| n!}k_p(S_n).
\end{align*}

\end{proof}

It remains to show that the result holds for $n \geq 60,000$.


\begin{proposition}\label{prop:BignResult}
If $n \geq 60,000$, then $k(S_n) \leq \prod_{p\bigm|n!}k_p(S_n)$ and $k(A_n) \leq \prod_{p\bigm|\frac{n!}{2}}k_p(A_n)$.
\end{proposition}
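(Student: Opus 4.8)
The plan is to exploit the gross mismatch in growth rates between the two sides. The left-hand sides satisfy $k(S_n)=p(n)$ and $k(A_n)\le 2p(n)$, both of size $e^{\pi\sqrt{2n/3}}$, whereas I will show the right-hand product is at least $e^{\theta(n)-\theta(n/2)}$, which is of size $e^{n/2}$. Since $n/2$ dwarfs $\sqrt n$, the product wins by an enormous margin, and the entire task is to make the standard estimates effective on the range $n\ge 60{,}000$.

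First I would record an explicit upper bound for the partition function: $p(n)<e^{\pi\sqrt{2n/3}}$ for all $n\ge 1$. This gives $k(S_n)<e^{\pi\sqrt{2n/3}}$ and $k(A_n)\le 2p(n)<2e^{\pi\sqrt{2n/3}}$, so it suffices to prove the single inequality that the relevant product is at least $2e^{\pi\sqrt{2n/3}}$; the same lower bound will serve both $S_n$ and $A_n$ because I only use odd primes, where the two groups have identical Sylow subgroups.

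For the lower bound on the product I would keep only the primes $p$ with $n/2<p\le n$. For such a prime $\lfloor n/p\rfloor=1$ and $\lfloor n/p^2\rfloor=0$, so a Sylow $p$-subgroup has order $p$, is cyclic, and contributes $k_p(S_n)=p$; each such $p$ is odd (as $p>n/2\ge 30{,}000$), hence divides $n!/2$ with $k_p(A_n)=k_p(S_n)=p$. Therefore both products are bounded below by $\prod_{n/2<p\le n}p=e^{\theta(n)-\theta(n/2)}$, where $\theta$ is the Chebyshev function. It then remains to verify $\theta(n)-\theta(n/2)\ge \log 2+\pi\sqrt{2n/3}$ for $n\ge 60{,}000$. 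Invoking effective Chebyshev bounds of Rosser--Schoenfeld type, say $\theta(x)>x(1-1/\log x)$ and $\theta(x)<x(1+1/\log x)$ on the relevant range, yields $\theta(n)-\theta(n/2)>\tfrac{n}{2}-\tfrac{n}{\log n}-\tfrac{n}{2\log(n/2)}$, whose bracketed corrections form a fixed fraction of $n/2$ that only shrinks as $n$ grows, so the left side exceeds roughly $0.35\,n$; comparing $0.35\,n$ against $\log 2+\pi\sqrt{2n/3}$ reduces to $0.35\sqrt n\ge \pi\sqrt{2/3}+o(1)$, which holds with vast room for all $n\ge 60{,}000$.

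The conceptual content lies entirely in the two imported inequalities, and the only real obstacle is technical bookkeeping: selecting effective forms of the partition bound and the prime-counting bounds whose stated ranges of validity actually cover $n\ge 60{,}000$, and then confirming that the resulting one-variable inequality is increasing and positive at the endpoint, so that a single evaluation at $n=60{,}000$ together with the $n/2$-versus-$\sqrt n$ growth comparison finishes the proof. Since at the endpoint the two sides already differ by something like $e^{21000}$ against $e^{630}$, even crude constants suffice and no delicate analysis is required.
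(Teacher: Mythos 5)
Your proposal is correct, but it takes a genuinely different route to the lower bound on the product of Sylow class numbers. The paper uses only the trivial estimate $k_p(G)\geq p\geq 2$ for every prime dividing $|G|$ (nontrivial center of a nontrivial $p$-group), obtaining $\prod_p k_p \geq 2^{\pi(n)}$, and then invokes $\pi(n)\geq \frac{n}{6\log n}$ from Apostol to compare $2^{n/(6\log n)}$ with $2e^{\pi\sqrt{2n/3}}$; this comparison is razor-thin at the threshold (the ratio at $n=60{,}000$ is only about $5.45$), which is precisely why the cutoff must be taken as large as $60{,}000$. You instead restrict to the primes $p\in(n/2,n]$, observe that the Sylow $p$-subgroup there is cyclic of order $p$ so that $k_p(S_n)=k_p(A_n)=p$ exactly, and bound the product below by $e^{\theta(n)-\theta(n/2)}\approx e^{0.35n}$ via effective Chebyshev estimates. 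Your bound is vastly stronger ($e^{21{,}000}$ versus the paper's $e^{630}$ at the endpoint), so it would in principle permit a much smaller threshold and hence a much smaller GAP verification; the price is that you must identify the structure of the relevant Sylow subgroups and import Rosser--Schoenfeld-type bounds on $\theta$ whose stated ranges of validity you would need to cite precisely, whereas the paper gets by with one elementary group-theoretic fact and a single textbook inequality for $\pi(n)$. Both arguments share the same upper bound $p(n)<e^{\pi\sqrt{2n/3}}$ and the same overall architecture (explicit endpoint check plus monotonicity), and yours contains no gap.
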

\begin{proof}
The center of a nontrivial $p$-group is nontrivial, so we have $k_p(G) \geq p$ for all primes $p$ dividing $|G|$.  Thus, we have \[\prod_{p\bigm|n!}k_p(S_n) \geq \prod_{p\bigm|n!}p \geq 2^{\pi(n)},\] where $\pi(n)$ is the number of primes that are at most $n$.  Similarly, $\prod_{p\bigm|\frac{n!}{2}}k_p(A_n) \geq 2^{\pi(n)}$.  Further, $\pi(n) \geq \frac{n}{6\log n}$ by~\cite[Theorem~4.6]{Apostol}, so $\prod_{p\bigm|n!}k_p(S_n)$ and $\prod_{p\bigm|\frac{n!}{2}}k_p(A_n)$ are both bounded below by $2^{\frac{n}{6\log n}}$. 

As previously stated, both $k(S_n)$ and $k(A_n)$ are bounded above by $2p(n)$. By~\cite[Theorem~14.5]{Apostol}, $p(n) < e^{\pi\sqrt{\frac{2n}{3}}}$, so we have both $k(S_n)$ and $k(A_n)$ are at most $2e^{\pi\sqrt{\frac{2n}{3}}}$.  It remains to show that $2e^{\pi\sqrt{\frac{2n}{3}}}$ is at most $2^{\frac{n}{6\log n}}$.  We let \[f(n)=\left(2^{\frac{n}{6\log n}}\right)\left(e^{\pi\sqrt{\frac{2n}{3}}}\right)^{-1}.\]  It is easy to check that $f(60,000) \approx 5.45 \geq 2$ and $f$ is increasing for $n \geq 60,000$, so $f(n) \geq 2$ and \[2e^{\pi\sqrt{\frac{2n}{3}}} \leq 2^{\frac{n}{6\log n}}\] for $n \geq 60,000$.  We conclude that if $G$ is $S_n$ or $A_n$, then \[k(G) \leq 2p(n) \leq 2e^{\pi\sqrt{\frac{2n}{3}}} \leq 2^{\frac{n}{6\log n}} \leq 2^{\pi(n)} \leq \prod_{p\bigm||G|}k_p(G).\]

 
\end{proof}

Propositions~\ref{prop:SmallnResult} and~\ref{prop:BignResult} imply our final theorem.

\begin{theorem}
If $n$ is any positive integer, then $k(S_n) \leq \prod_{p\bigm|n!}k_p(S_n)$ and $k(A_n) \leq \prod_{p\bigm|\frac{n!}{2}}k_p(A_n)$.
\end{theorem}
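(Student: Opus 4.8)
The plan is to prove the two inequalities by a dichotomy on $n$, splitting at the threshold $n = 60{,}000$ and invoking the two cases already isolated as separate propositions. For the theorem itself the argument is then immediate: given any positive integer $n$, either $n < 60{,}000$, in which case Proposition~\ref{prop:SmallnResult} supplies both inequalities directly, or $n \geq 60{,}000$, in which case Proposition~\ref{prop:BignResult} does so. Since these two ranges exhaust the positive integers, concatenating the two propositions yields the desired bounds on $k(S_n)$ and $k(A_n)$ for every $n$.

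The real content, which I would organize behind this dichotomy, lies in arranging the two propositions so their ranges meet cleanly. For the large-$n$ regime I would compare a sharp upper bound on $k(S_n)$ and $k(A_n)$ against a lower bound on the Sylow product. The upper side uses $k(S_n) = p(n)$ and $k(A_n) \leq 2p(n)$ together with the Hardy--Ramanujan type estimate $p(n) < e^{\pi\sqrt{2n/3}}$; the lower side uses the trivial bound $k_p(G) \geq p$ for each prime divisor, so that $\prod_p k_p(G)$ dominates $2^{\pi(n)}$, which in turn exceeds $2^{n/(6\log n)}$ by an explicit lower bound for $\pi(n)$. The crux is then the single-variable inequality $2e^{\pi\sqrt{2n/3}} \leq 2^{n/(6\log n)}$, which I would establish by showing the ratio $f(n)$ is at least $2$ at the threshold and monotone increasing beyond it.

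For the small-$n$ regime the crude bound $k_p(G) \geq p$ is too weak, so I would instead invoke the sharper Hall lower bound of Proposition~\ref{prop:pbound}, which depends only on the order of the Sylow subgroup, and verify the resulting finite family of inequalities $2p(n) \leq \prod_p \bigl(p^{e_p} + (p^2-1)m_p\bigr)$ by a direct GAP computation as recorded in Proposition~\ref{prop:SmallnMain}. A useful economy is that the Hall bound for the $2$-part of $A_n$ also bounds $k_2(S_n)$ from below, and that odd Sylow subgroups of $A_n$ and $S_n$ have equal order, so a single tabulation settles both groups across the whole range.

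The step I expect to be the main obstacle is the calibration of the threshold. It must be chosen large enough that the monotonicity and sign of $f(n)$ can be verified once and for all, yet small enough that the finite verification below it remains computationally feasible. Ensuring that $60{,}000$ works on both sides simultaneously---so that the asymptotic inequality has genuinely taken over while the GAP check over all smaller $n$ still terminates---is where the argument is most delicate.
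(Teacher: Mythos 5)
Your proof is correct and follows exactly the paper's route: the theorem is obtained by splitting on whether $n < 60{,}000$ or $n \geq 60{,}000$ and citing Propositions~\ref{prop:SmallnResult} and~\ref{prop:BignResult} respectively, and your sketches of the large-$n$ analytic estimate and the small-$n$ Hall-bound-plus-GAP verification match the paper's proofs of those propositions. No gaps.
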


\end{section}
\newpage
\bibliographystyle{amsplain}
\bibliography{BibCong} 

\providecommand{\bysame}{\leavevmode\hbox to3em{\hrulefill}\thinspace}
\providecommand{\MR}{\relax\ifhmode\unskip\space\fi MR }
\providecommand{\MRhref}[2]{%
  \href{http://www.ams.org/mathscinet-getitem?mr=#1}{#2}
}
\providecommand{\href}[2]{#2}
\begin{thebibliography}{1}

\bibitem{Apostol}
T.~M. Apostol, \emph{Introduction to analytic number theory}, Springer-Verlag,
  New York-Heidelberg, 1976, Undergraduate Texts in Mathematics.

\bibitem{GAP}
The GAP~Group, \emph{{GAP -- Groups, Algorithms, and Programming, Version
  4.6.4}}, 2013.

\bibitem{Huppert}
B.~Huppert, \emph{Endliche {G}ruppen. {I}}, Die Grundlehren der Mathematischen
  Wissenschaften, Band 134, Springer-Verlag, Berlin-New York, 1967.

\bibitem{Kourovka}
V.~D. Mazurov and E.~I. Khukhro (eds.), \emph{The {K}ourovka notebook},
  seventeenth ed., Russian Academy of Sciences Siberian Division, Institute of
  Mathematics, Novosibirsk, 2010, Unsolved problems in group theory, Including
  archive of solved problems.

\bibitem{Scott}
W.~R. Scott, \emph{Group theory}, second ed., Dover Publications, Inc., New
  York, 1987.

\end{thebibliography}

\end{document}